\documentclass[reqno,centertags,12pt,draft]{amsart}
\usepackage[letterpaper,margin=1.3in]{geometry}
\usepackage{amsmath,amsthm,amsfonts,amssymb,enumerate}
\usepackage{bigints}


\newcommand{\bb}{\mathbb}

\newcommand{\ca}{\mathrm{Cap}}

\newtheorem{theorem}{Theorem}

\newtheorem{corollary}[theorem]{Corollary}
\theoremstyle{definition}

\theoremstyle{remark}

\numberwithin{equation}{section}
\numberwithin{theorem}{section}

\begin{document}
\title{ Extremal polynomials on a Jordan arc}

\author{G\"{o}kalp Alpan}
\address{Department of Mathematics, Uppsala University, Uppsala, Sweden}
\email{gokalp.alpan@math.uu.se}

\subjclass[2010]{Primary 41A17; Secondary 41A44, 42C05, 33C45}
\keywords{Widom factors, Chebyshev polynomials, orthogonal polynomials, extremal polynomials, Jordan arc}

\begin{abstract}
Let $\Gamma$ be a $C^{2+}$ Jordan arc and let $\Gamma_0$ be the open arc which consists of interior points of $\Gamma$. We find concrete upper and lower bounds for the limit of Widom factors for $L_2(\mu)$ extremal polynomials on $\Gamma$ which was given in \cite{Wid69}. In addition, we show that the upper bound for the limit supremum of Widom factors for the weighted Chebyshev polynomials which was obtained in \cite{Wid69} can be improved once two normal derivatives of the Green function do not agree at one point $z\in \Gamma_0$. We also show that if $\Gamma_0$ is not analytic then we have improved upper bounds.
\end{abstract}

\date{\today}
\maketitle
\nocite{*}

\section{Introduction}
We say that a Jordan arc $\Gamma$ is $C^{2+}$ smooth if it has a twice continuously differentiable parametrization $\gamma(t)$, $t\in[-1,1]$ such that $\gamma^{\prime}(t)\neq 0$ for each $t\in[-1,1]$  and $\gamma^{\prime\prime}$ is $\mathrm{Lip}c$ for some $c>0$. An open Jordan arc $\Gamma_0$ is called analytic if it has a parametrization $\gamma(t)$, $-1<t<1$, such that $\gamma$ can be expanded into a power series around each $t\in (-1,1)$ and $\gamma^{\prime}(t)\neq 0$ for $-1<t<1$. Throughout $\Gamma$ will denote a $C^{2+}$ Jordan arc and $\Gamma_0$ will denote the open Jordan arc which consists of all points in $\Gamma$ except the endpoints $A$ and $B$.

Let $\mu_\Gamma$ denote the equilibrium measure of $\Gamma$ and $\ca(\Gamma)$ denote the logarithmic capacity of $\Gamma$. The Green function $g_\Gamma$ for the region $\overline{\bb C}\setminus \Gamma$ is given by

\begin{align}
	g_\Gamma(z) = -\log\ca(\Gamma)+\int\log|z-\zeta|\,d\mu_\Gamma(\zeta), \quad z\in\bb C.
\end{align}
A finite (positive) Borel measure $\mu$ of the form $d\mu = f d\mu_\Gamma$ is in the Szeg\H{o} class $\mathrm{Sz}({\Gamma})$ if 
\begin{align}
S(\mu):= S(f)= \exp\left[\int \log{f} d\mu_\Gamma\right]>0.
\end{align}
Let $\rho$ be an upper semicontinuous non-negative  function which is bounded above on $\Gamma$ with $S(\rho)>0$. Let
\begin{align}
	t_n(\Gamma,\rho):= \inf_{P\in \Pi_n}\sup_{z\in\Gamma}|P(z)\rho(z)| 
\end{align}
where infimum is taken over all monic polynomials $\Pi_n$ of degree $n$. Then $n$-th Widom factor for the sup-norm with respect to the weight function $\rho$ is defined as 

\begin{align}
	W_{\infty,n}(\Gamma,\rho):=\frac{t_n(\Gamma,\rho)}{\ca(\Gamma)^n}.
\end{align}
For the case $\rho\equiv 1$, we use the notation $W_{\infty,n}(\Gamma,1)$. It is trivial in this case that $S(\rho)=1$.

For a measure $\mu\in \mathrm{Sz}(\Gamma)$, let $P_n(\cdot; \mu)$ denote $n$-th monic orthogonal polynomial for $\mu$. Then $n$-th Widom factor for $\mu$ is defined as
\begin{align}
	W_{2,n}(\mu):=\frac{\|P_n(\cdot;\mu)\|_{L_2{(\mu)}}}{\ca(\Gamma)^n}.
\end{align}
For a deeper discussion of Widom factors we refer the reader to \cite{alper, AlpZin2, AndNaz18, CSZ3, CSZ4, Eic17, GonHat15, kali1, kali, SchZin, Tot09, Tot14, TotYud15, Wid69}. Before stating our results on Widom factors we have to introduce several concepts and summarize some key results from \cite{Wid69}. 

Let $\phi$ be the conformal map from $\overline{\bb C}\setminus \Gamma$  onto $\overline{\bb C}\setminus \bb D$ such that $\phi(\infty)=\infty$ and 
\begin{align}
\phi^{\prime}(\infty):= \lim_{z\rightarrow\infty} \frac{\phi(z)}{z}=\ca(\Gamma)^{-1}.
\end{align}

We call two sides of $\Gamma$ positive and negative sides. The map $\phi$ has continuously differentiable boundary values $\phi_{+}$ and $\phi_{-}$ except for the endpoints. Let 
\begin{align}
\omega_\Gamma(z):=\frac{1}{2\pi}(|\phi^{\prime}_{+}(z)|+|\phi^{\prime}_{-}(z)|).
\end{align}
Then
\begin{align}
	d\mu_\Gamma=\omega_\Gamma\, ds
\end{align}
where $ds$ is the arc-length measure on $\Gamma$.
If $n_{\pm}$ are the two normals at $z\in \Gamma$ then
\begin{align}\label{hadda}
g_{\pm}^{\prime}(z):=\frac{\partial g_\Gamma(z)}{\partial n_{\pm}}=|\phi^{\prime}_{\pm}(z)|.
\end{align}
Note that $g_{\pm}^{\prime}(z)>0$ for all $z\in \Gamma_0$ and both $g_{\pm}^{\prime}(z)\sqrt{|z-A||z-B|}$ can be extended to positive $C^{0+}$ functions on $\Gamma$, see \cite[Appendix]{Tot14a}. This in particular implies that $\omega_\Gamma(z)\sqrt{|z-A||z-B|}$ is positive and $C^{0+}$ on $\Gamma$.

 Let $\mu\in\mathrm{Sz}(\Gamma)$ with $d\mu=fd\mu_\Gamma$. Denote the harmonic function on $\overline{\bb C}\setminus \Gamma$ with boundary values $\log{(f \omega_\Gamma)}$ on $\Gamma$ by $h$. Its harmonic conjugate $\tilde{h}$ is chosen so that
\begin{align}
R_{\mu}(z):= \exp[h(z)+i\tilde{h}(z)]
\end{align}
is the analytic function on   $\overline{\bb C}\setminus \Gamma$ with (non-tangential) boundary values $|R_{\mu}(z)|=f(z) \,\omega_\Gamma(z)$ ($ds$ a.e.)  and $R_\mu(\infty)>0$. Here $R_\mu(z)\neq 0$ for $z\in \overline{\bb C}\setminus \Gamma$. Similarly, we define $R_f$ to be the analytic function  without any zeros  on $\overline{\bb C}\setminus \Gamma$ such that $|R(z)|=f(z)$ ($ds$ a.e.) on $\Gamma$ and $R_f(\infty)>0$. Then 

\begin{align}\label{R1}
	R_{\mu}(z)=R_{\mu_\Gamma}(z)R_f(z)
\end{align}
and
\begin{align}\label{S1}
	R_f(\infty)=S(f).
\end{align}
The above reformulation of $R_\mu$ is quite useful as it allows us to state and prove our results in terms of $\mu_\Gamma$.

We define $H_2(\overline{\bb C}\setminus \bb D)$ as the space of analytic functions $F$ on $\overline{\bb C}\setminus \bb D$ such that $G(z)=F(1/z)$ is in the usual Hardy space $H_2(\bb D)$.

Let $\Psi$ be the inverse of $\phi$. 
By $H_2( \overline{\bb C}\setminus \Gamma,\mu)$ we mean (see e.g \cite{kali1, kali}) analytic functions $F$ on  $\overline{\bb C}\setminus \Gamma$ such that 
\begin{align}
	F(\Psi)\sqrt{\frac{R_\mu(\Psi)}{\phi^{\prime}(\Psi)}}\in H_2(\overline{\bb C}\setminus \bb D).
\end{align}

These functions have non-tangential boundary values. 

The  norm is given by
\begin{align}\label{normi}
	\|F\|_{H_2( \overline{\bb C}\setminus \Gamma,\,\mu)}:=\left[\int (|F_{+}(z)|^2 +|F_{-}(z)|^2 ) \, d\mu(z)\right]^{1/2}
\end{align}

Define the quantity
\begin{align}\label{pro1}
\nu(\mu):= \inf _F	\|F\|_{H_2( \overline{\bb C}\setminus \Gamma,\,\mu)}^2
\end{align}
where the infimum is taken over all functions $F$ such that $F\in H_2( \overline{\bb C}\setminus \Gamma,\mu)$ with $F(\infty)=1$. Then
\begin{align}\label{expi}
	\nu(\mu)= 2\pi \frac{R_\mu(\infty)}{\phi^{\prime}(\infty)}=2\pi R_\mu(\infty) \ca{(\Gamma)}
\end{align}
and 
\begin{align}\label{extra}
F_\mu(z):= \sqrt{\frac{\phi^{\prime}(z)}{R_\mu(z)}}\sqrt{\frac{R_\mu(\infty)}{\phi^{\prime}(\infty)}}
\end{align}
is the unique extremal function for the extremal problem \eqref{pro1}, see \cite[Theorem 6.2]{Wid69}.

It follows from \cite[Theorem 12.3, Theorem 9.2]{Wid69}, \eqref{R1}, \eqref{S1} that

\begin{align}
\lim_{n\rightarrow\infty} [W_{2,n}(\mu)]^2&=2\pi R_\mu(\infty)\ca(\Gamma)\\
&=[2\pi R_{\mu_\Gamma}(\infty)\ca(\Gamma)]R_f(\infty)\\
&= \nu(\mu_\Gamma)S(f).\label{genel}
\end{align}
The next theorem is our first result and this estimation of $\nu(\mu_\Gamma)$ is used in the subsequent results.
\begin{theorem}\label{mainn}
	The quantity $\nu(\mu_\Gamma)$ satisfies 
	\begin{align}\label{nu1}
		1<\nu(\mu_\Gamma)\leq 2.
	\end{align}
The equality
\begin{align}\label{nu2}
	\nu(\mu_\Gamma)=2
\end{align}
holds if and only if
\begin{align}\label{sym}
g_{+}^{\prime}(z)=g_{-}^{\prime}(z)\,\, \mbox{ for all }  z\in \Gamma_0.
\end{align}
\end{theorem}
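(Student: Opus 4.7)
The plan is to express $\log\nu(\mu_\Gamma)$ as a binary-entropy integral against arc length, so that both inequalities and the equality characterization drop out of the range $[0,\log 2]$ of the binary entropy. Starting from \eqref{expi} and the definition of $R_{\mu_\Gamma}(\infty)$ via the harmonic function $h$, the identity $h(\infty) = \int \log \omega_\Gamma\,d\mu_\Gamma$ (harmonic measure at $\infty$ for $\overline{\bb C}\setminus\Gamma$ equals $\mu_\Gamma$), together with $\omega_\Gamma = (g'_+ + g'_-)/(2\pi)$, gives
\begin{align*}
\log\nu(\mu_\Gamma) = \log\ca(\Gamma) + \int_\Gamma \log(g'_+ + g'_-)\,d\mu_\Gamma.
\end{align*}

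Next I evaluate $\log\ca(\Gamma)$ by applying Jensen's mean value formula to $\Psi'$, which is analytic and nowhere vanishing on $\{|w|>1\}$ with $\Psi'(\infty) = \ca(\Gamma)$: this yields $\log\ca(\Gamma) = (2\pi)^{-1}\int_0^{2\pi}\log|\Psi'(e^{i\theta})|\,d\theta$. Splitting the unit circle into the two arcs that $\phi$ sends to the two sides of $\Gamma$, using $|\Psi'| = 1/|\phi'_\pm|= 1/g'_\pm$ from \eqref{hadda} and the change of variables $d\theta = g'_\pm\,ds$ on each side, I obtain
\begin{align*}
\log\ca(\Gamma) = -\frac{1}{2\pi}\int_\Gamma\bigl(g'_+\log g'_+ + g'_-\log g'_-\bigr)\,ds.
\end{align*}

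Combining the two displays and rewriting $d\mu_\Gamma = (g'_+ + g'_-)\,ds/(2\pi)$ produces
\begin{align*}
\log\nu(\mu_\Gamma) = \frac{1}{2\pi}\int_\Gamma (g'_+ + g'_-)\bigl(-a\log a - b\log b\bigr)\,ds,\quad a := \frac{g'_+}{g'_+ + g'_-},\ b := \frac{g'_-}{g'_+ + g'_-}.
\end{align*}
Since $0\leq -a\log a - b\log b\leq \log 2$ with the upper bound attained exactly at $a=b=1/2$, and since $\int_\Gamma (g'_+ + g'_-)\,ds = 2\pi$, it follows that $0\leq \log\nu(\mu_\Gamma)\leq \log 2$, with upper equality iff $g'_+ = g'_-$ $\,ds$-a.e., i.e.\ on $\Gamma_0$. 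For the strict lower bound I invoke $g'_\pm > 0$ on $\Gamma_0$ (recorded after \eqref{hadda}): this forces $a,b\in(0,1)$ and a strictly positive entropy throughout $\Gamma_0$, so $\log\nu(\mu_\Gamma) > 0$.

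The main technical obstacle is the Jensen step: one must check that $\Psi'$ extends analytically to $\infty$ with $\Psi'(\infty) = \ca(\Gamma)$ and has boundary values for which $\log|\Psi'(e^{i\theta})|\in L^1(d\theta)$, despite the zeros of $\Psi'$ at the two preimages of the endpoints $A,B$ on the unit circle. Both facts follow from the $C^{2+}$ regularity of $\Gamma$ and the standard half-power local form of $\phi$ at the endpoints, consistent with the endpoint asymptotics $g'_\pm \sim c|z-A|^{-1/2}$ that the paper invokes; the change-of-variables identity $d\theta = g'_\pm\,ds$ on each side should be justified along the way.
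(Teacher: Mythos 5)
Your argument is correct, and it takes a genuinely different route from the paper. The paper proves $\nu(\mu_\Gamma)\leq 2$ by plugging the trial function $F\equiv 1$ into the extremal problem \eqref{pro1} (its norm is $2$ because $\mu_\Gamma$ is a unit measure), gets the strict lower bound from the maximum modulus principle applied to $\sqrt{\phi'/R_{\mu_\Gamma}}$ pulled back to the Jordan curve $\tilde\Gamma=L(\Gamma)$ (the boundary values of $\phi'/(2\pi R_{\mu_\Gamma})$ being $g'_{\pm}/(g'_++g'_-)<1$), and settles the equality case via Widom's uniqueness of the extremal function \eqref{extra}: if $F\equiv 1$ is extremal then $\sqrt{\phi'/R_{\mu_\Gamma}}$ is constant, forcing $|\phi'_+|=|\phi'_-|$. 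You instead derive a closed-form \emph{entropy representation} $\log\nu(\mu_\Gamma)=\frac{1}{2\pi}\int_\Gamma (g'_++g'_-)\,H\bigl(\tfrac{g'_+}{g'_++g'_-}\bigr)\,ds$ with $H$ the binary entropy, from which both bounds and the equality characterization fall out simultaneously; your computation checks out (e.g.\ it reproduces $\nu=2$ for an interval and $\log\ca([-1,1])=-\log 2$), and the identities $h(\infty)=\int\log\omega_\Gamma\,d\mu_\Gamma$ (harmonic measure at $\infty$ equals $\mu_\Gamma$) and $d\theta=g'_{\pm}\,ds$ are sound. What your route buys is strictly more than the theorem: an exact formula quantifying the deficit $\log 2-\log\nu(\mu_\Gamma)$ in terms of the asymmetry of the normal derivatives, and a lower-bound argument that avoids having to control $g'_{\pm}/(g'_++g'_-)$ at the endpoints. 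What it costs is the Jensen step $\log\ca(\Gamma)=\frac{1}{2\pi}\int_0^{2\pi}\log|\Psi'(e^{i\theta})|\,d\theta$, which you correctly flag but do not fully execute: since $\Psi'$ vanishes (to first order) at the two preimages of $A,B$, one must verify that $\log|\Psi'|$ is genuinely the Poisson integral of its boundary values, e.g.\ by factoring out $(w-w_A)(w-w_B)/w^2$ and using that $g'_{\pm}\sqrt{|z-A||z-B|}$ is bounded above and below (which the paper records after \eqref{hadda}), so that the remaining factor has a bounded harmonic logarithm. With that detail supplied, your proof is complete.
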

It was shown in \cite[Theorem 1]{Tot14} that $\liminf_{n\rightarrow\infty}W_{\infty,n}(\Gamma,1)>1.$ If $\mu\in \mathrm{Sz}(\Gamma)$ and $\mu(\Gamma)=1$, it is easy to see that $W_{2,n}(\mu)\leq W_{\infty,n}(\Gamma,1)$. Therefore \\$\liminf_{n\rightarrow\infty}W_{\infty,n}(\Gamma,1)\geq\sqrt{\nu(\mu_\Gamma)}$ and the lower bound $\sqrt{\nu(\mu_\Gamma)}$ is larger than 1 and it is rather explicit, see \eqref{expi}.

We combine Theorem \ref{mainn} and \eqref{genel} and state a result which can be considered a generalization of the Szeg\H{o} theorem on an interval as  $ \frac{[W_{2,n}(\mu)]^2}{S(\mu)}$ has the same limit for all $\mu\in \mathrm{Sz}(\Gamma)$.

\begin{corollary}\label{haydaa}
	Let $\mu\in \mathrm{Sz}(\Gamma)$. Then 
	\begin{align}
		1< \lim_{n\rightarrow \infty} [W_{2,n}(\mu_\Gamma)]^2= \lim_{n\rightarrow \infty} \frac{[W_{2,n}(\mu)]^2}{S(\mu)}\leq 2.
	\end{align}
The equality
\begin{align}\label{nu28}
\lim_{n\rightarrow \infty} \frac{[W_{2,n}(\mu)]^2}{S(\mu)}= 2
\end{align}
holds if and only if
\begin{align}\label{symt}
	g_{+}^{\prime}(z)=g_{-}^{\prime}(z)\,\, \mbox{ for all }  z\in \Gamma_0.
\end{align}

\end{corollary}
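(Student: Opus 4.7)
The corollary is essentially a direct combination of Theorem~\ref{mainn} with the limit formula \eqref{genel}, together with the factorization $R_\mu = R_{\mu_\Gamma} R_f$ from \eqref{R1}.

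The plan is as follows. First, I would apply \eqref{genel} to the special case $\mu = \mu_\Gamma$: here $f \equiv 1$, so $R_f \equiv 1$ and $S(f) = 1$, which gives
\begin{align*}
\lim_{n\to\infty}[W_{2,n}(\mu_\Gamma)]^2 = \nu(\mu_\Gamma).
\end{align*}
Next, for an arbitrary $\mu \in \mathrm{Sz}(\Gamma)$ with $d\mu = f\,d\mu_\Gamma$, the same formula \eqref{genel} gives $\lim_n [W_{2,n}(\mu)]^2 = \nu(\mu_\Gamma) S(f)$. Since $S(\mu) = S(f)$ by definition and $S(f) > 0$ by the Szeg\H{o} assumption, dividing yields
\begin{align*}
\lim_{n\to\infty}\frac{[W_{2,n}(\mu)]^2}{S(\mu)} = \nu(\mu_\Gamma),
\end{align*}
which matches the previous limit. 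This establishes the equality of the two limits in the corollary.

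Finally, to obtain the bounds and the characterization of the equality case, I would simply invoke Theorem~\ref{mainn}: it gives $1 < \nu(\mu_\Gamma) \leq 2$, which turns into the displayed strict lower bound and upper bound, and it identifies $\nu(\mu_\Gamma) = 2$ with the symmetry condition $g_+'(z) = g_-'(z)$ for all $z \in \Gamma_0$. Translating back through the identity above, this is exactly the condition \eqref{symt} for the equality \eqref{nu28}.

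There is no real obstacle here; the work has already been done in Theorem~\ref{mainn} and in the derivation of \eqref{genel}. The only points to be careful about are that $S(f) > 0$ is guaranteed by $\mu \in \mathrm{Sz}(\Gamma)$ so that division by $S(\mu)$ is legitimate, and that the factorization \eqref{R1}--\eqref{S1} is precisely what lets the common factor $\nu(\mu_\Gamma) = 2\pi R_{\mu_\Gamma}(\infty)\,\ca(\Gamma)$ be separated from the weight-dependent factor $S(f)$.
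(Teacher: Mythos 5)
Your proposal is correct and is exactly the paper's intended argument: the corollary is obtained by specializing \eqref{genel} to $\mu=\mu_\Gamma$ and to general $\mu\in\mathrm{Sz}(\Gamma)$, dividing by $S(\mu)=S(f)>0$, and then invoking Theorem~\ref{mainn} for the bounds $1<\nu(\mu_\Gamma)\leq 2$ and the characterization of equality. Nothing is missing.
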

Widom proved in \cite[Theorem 11.4]{Wid69} that $\limsup_{n\rightarrow\infty}W_{\infty,n}(\Gamma,\rho)\leq 2 S(\rho)$ if $\rho$ is as in Theorem \ref{cheby}. He also conjectured in \cite[Section 11]{Wid69} that $\liminf_{n\rightarrow\infty}W_{\infty,n}(\Gamma,\rho)\geq 2 S(\rho)$. This conjecture was proven to be wrong as  $\lim_{n\rightarrow\infty}W_{\infty,n}(\Gamma, 1)<2$ if $\Gamma$ is a circular arc, see \cite{TotYud15,thi}. In the next theorem we show that  \eqref{upp2} holds under very mild conditions.

\begin{theorem}\label{cheby}
	Let $\rho$ be upper semicontinuous non-negative, bounded above on $\Gamma$ and $S(\rho)>0$. Then
	\begin{align}\label{upp}
		\limsup_{n\rightarrow \infty} W_{\infty,n}(\Gamma,\rho)\leq 2\sqrt{\pi R_{\mu_\Gamma}(\infty) \ca(\Gamma)} S(\rho).
	\end{align}
If there is a $z\in \Gamma_0$ such that 
\begin{align}\label{condi}
g_{+}^{\prime}(z)\neq g_{-}^{\prime}(z)
\end{align}

then 
	\begin{align}\label{upp2}
	\limsup_{n\rightarrow \infty} W_{\infty,n}(\Gamma,\rho)< 2   S(\rho).
\end{align}
\end{theorem}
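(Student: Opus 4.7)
First, assertion \eqref{upp2} is an immediate consequence of \eqref{upp} together with Theorem \ref{mainn}: hypothesis \eqref{condi} forces $\nu(\mu_\Gamma)=2\pi R_{\mu_\Gamma}(\infty)\ca(\Gamma)<2$ by Theorem \ref{mainn}, hence $2\sqrt{\pi R_{\mu_\Gamma}(\infty)\ca(\Gamma)}<2$, and \eqref{upp} then yields \eqref{upp2}. The substance of the theorem therefore lies in the upper bound \eqref{upp}.

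My plan for \eqref{upp} is to revisit Widom's proof of \cite[Theorem 11.4]{Wid69} and track the constant sharply. After the standard approximation reducing upper-semicontinuous $\rho$ to continuous strictly positive weights from above, I would set $d\mu=\rho^2\,d\mu_\Gamma$ and work with the $L_2$ extremal function $F_\mu$ from \eqref{extra}. The function $F_\mu(z)\phi(z)^n$ on $\overline{\bb C}\setminus\Gamma$ is meromorphic with a pole of order $n$ at infinity and leading behavior $z^n/\ca(\Gamma)^n$, but it fails to be a polynomial because its $+$ and $-$ boundary values on $\Gamma$ disagree. To remedy this I would lift $F_\mu\phi^n$ to the two-sheeted Riemann surface of $\sqrt{(z-A)(z-B)}$ and descend the sum over the sheet involution back to $\overline{\bb C}$; using the $C^{0+}$ regularity of $g_\pm'(z)\sqrt{|z-A||z-B|}$ noted after \eqref{hadda}, the endpoint singularities cancel and the resulting function is, up to an error that is $o(\ca(\Gamma)^n)$ as $n\to\infty$, a monic polynomial $P_n$ of degree $n$ whose boundary values on $\Gamma$ satisfy $|P_n(z)|\le(|F_{\mu,+}(z)|+|F_{\mu,-}(z)|)\,\ca(\Gamma)^n(1+o(1))$.

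The sharper constant then comes from a Cauchy--Schwarz estimate. A direct computation from \eqref{extra}, $d\mu_\Gamma=\omega_\Gamma\,ds$, $\omega_\Gamma=(|\phi_+'|+|\phi_-'|)/(2\pi)$, and $R_\mu(\infty)=R_{\mu_\Gamma}(\infty)S(\rho)^2$ gives, for $z\in\Gamma_0$,
\[
A_\pm(z)^2:=|F_{\mu,\pm}(z)|^2\,\rho(z)^2=\nu(\mu_\Gamma)\,S(\rho)^2\cdot\frac{|\phi_\pm'(z)|}{|\phi_+'(z)|+|\phi_-'(z)|},
\]
whence the pointwise identity $A_+(z)^2+A_-(z)^2=\nu(\mu_\Gamma)\,S(\rho)^2$. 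Cauchy--Schwarz then yields
\[
A_+(z)+A_-(z)\le\sqrt{2(A_+(z)^2+A_-(z)^2)}=\sqrt{2\nu(\mu_\Gamma)}\,S(\rho)=2\sqrt{\pi R_{\mu_\Gamma}(\infty)\ca(\Gamma)}\,S(\rho),
\]
and \eqref{upp} follows after dividing $|P_n\rho|\le(A_++A_-)\,\ca(\Gamma)^n(1+o(1))$ by $\ca(\Gamma)^n$ and letting $n\to\infty$. Widom's looser constant $2S(\rho)$ corresponds to bounding each $A_\pm$ separately and then summing, which discards exactly the Cauchy--Schwarz gain.

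The principal technical obstacle will be the rigorous polynomial extraction: one must verify that the sum of the two sheet-lifts of $F_\mu\phi^n$ truly descends to a polynomial (modulo a vanishing correction), not merely to a rational function with residual poles at $A$ and $B$. The $(z-A)^{-1/4}$-type branching of $F_\mu$ inherited from $\sqrt{\phi'}$ combines nontrivially across the sheets, and the cancellation of endpoint singularities is precisely what the $C^{0+}$ positivity of $g_\pm'\sqrt{|z-A||z-B|}$ recorded after \eqref{hadda} provides. A safer alternative, should the two-sheeted construction prove delicate to execute cleanly, is to follow Widom's argument in \cite[Section 11]{Wid69} essentially verbatim but insert the Cauchy--Schwarz bound above at the step where Widom bounds $A_++A_-$ by summing termwise.
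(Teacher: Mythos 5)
Your proposal is correct and follows essentially the same route as the paper: the paper also reduces to $d\mu=\rho^2\,d\mu_\Gamma$, uses Widom's Cauchy-integral polynomial $Q_n$ with $\max_{z\in\Gamma}|Q_n(z)-F_{\mu,+}(z)\phi_+^n(z)-F_{\mu,-}(z)\phi_-^n(z)|\to0$ (i.e.\ exactly your ``safer alternative'' via \cite[Lemma 11.2]{Wid69}, rather than the two-sheeted lift), and your Cauchy--Schwarz step with $A_+^2+A_-^2=\nu(\mu_\Gamma)S(\rho)^2$ is precisely the paper's inequality $(\sqrt{a}+\sqrt{b})/\sqrt{a+b}\le\sqrt2$ applied to $a=g'_+$, $b=g'_-$. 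The handling of \eqref{upp2} via Theorem \ref{mainn} and the monotone approximation $\rho_m\downarrow\rho$ also match the paper.
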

The key idea in the proof of Theorem \ref{cheby} is to associate the extremal problems $W_{\infty,n}(\Gamma,\rho)$ and $W_{2,n}(\Gamma, \rho^2\,\mu_\Gamma)$. We also refer the reader to \cite{alper} for another application of this idea.

It is easy to see that if $\Gamma$ is an interval then \eqref{symt} holds and $W_{\infty,n}(\Gamma,1)=2$ for all $n$. The normal derivatives $g_{\pm}^{\prime}$ also play an important role in Bernstein and Markov inequalities, see \cite{act}. In the next theorem, we replace the condition \eqref{condi} with another one (non-analyticity of $\Gamma_0$) which is easier to check.
\begin{theorem}\label{easy}
	Let $\mu\in \mathrm{Sz}(\Gamma)$ and $\rho$ be upper semicontinuous non-negative, bounded above and $S(\rho)>0$. If $\Gamma_0$ is not analytic then
	\begin{enumerate}[$(i)$]
		\item $\lim_{n\rightarrow\infty}\frac{[W_{2,n}(\mu)]^2}{S(\mu)}< 2.$
		\item  	$\limsup_{n\rightarrow \infty} W_{\infty,n}(\Gamma,\rho)< 2   S(\rho).$
	\end{enumerate}
\end{theorem}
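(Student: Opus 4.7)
The plan is to reduce both parts to a single structural lemma and prove it via Schwarz reflection applied to the uniformizing map. By Corollary~\ref{haydaa}, the strict inequality in part (i) is equivalent to the failure of \eqref{symt}, i.e., the existence of some $z\in\Gamma_0$ with $g_+'(z)\ne g_-'(z)$; Theorem~\ref{cheby} then gives part (ii) from the very same condition. Thus it suffices to prove: \emph{if $g_+'(z)=g_-'(z)$ for every $z\in\Gamma_0$, then $\Gamma_0$ is analytic.}

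To prove this, I would work through $\Psi=\phi^{-1}\colon\overline{\bb C}\setminus\overline{\bb D}\to\overline{\bb C}\setminus\Gamma$. Since $\Gamma$ is $C^{2+}$, classical Kellogg--Warschawski-type boundary regularity gives that $\Psi$ extends $C^{1+}$ to $\partial\bb D$ and $\Psi|_{\partial\bb D}$ is exactly two-to-one onto $\Gamma$ away from the preimages $w_A,w_B\in\partial\bb D$ of the two endpoints $A,B$. Swapping the two preimages of each $z\in\Gamma_0$ defines an orientation-reversing involution $\tau\colon\partial\bb D\to\partial\bb D$, $C^{1+}$ away from $w_A,w_B$, with $\Psi(\tau(w))=\Psi(w)$.

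Using $\phi\circ\Psi=\mathrm{id}$ together with \eqref{hadda}, the boundary values of $|\Psi'|$ on the two open arcs of $\partial\bb D\setminus\{w_A,w_B\}$ are $1/g_+'(\Psi(w))$ and $1/g_-'(\Psi(w))$ respectively. Differentiating the identity $\Psi(e^{i\theta})=\Psi(e^{i\tau(\theta)})$ along $\partial\bb D$ and taking moduli yields $|\Psi'(e^{i\theta})|=|\Psi'(e^{i\tau(\theta)})|\cdot|\tau'(\theta)|$, so the hypothesis $g_+'\equiv g_-'$ on $\Gamma_0$ is equivalent to $|\tau'|\equiv 1$. A continuous orientation-reversing involution of $\partial\bb D$ with $|\tau'|\equiv 1$ is forced to be a rigid reflection $\theta\mapsto c-\theta$, i.e., the analytic Möbius map $w\mapsto e^{ic}/w$. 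Setting $\Psi^*(w):=\Psi(e^{ic}/w)$ then gives an analytic function on $0<|w|<1$ that matches $\Psi$ continuously on $\partial\bb D$, so by Morera's theorem $\Psi$ and $\Psi^*$ glue to a single analytic function in a neighborhood of $\partial\bb D\setminus\{w_A,w_B\}$. Every interior point of $\Gamma_0$ is thereby realized as the analytic image of a nearby arc of $\partial\bb D$, so $\Gamma_0$ is analytic, which is the required contradiction.

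The main obstacle is the rigidity step that promotes the pointwise identity $|\tau'|\equiv 1$ to the globally affine structure of $\tau$: this uses that $\tau$ is a $C^{1+}$ orientation-reversing involution on each of the two open arcs of $\partial\bb D\setminus\{w_A,w_B\}$, so that $\tau'$ is continuous and of constant sign $-1$, forcing $\tau'\equiv -1$ and hence $\tau$ affine. Once this is in hand, the Schwarz--Morera gluing is standard, provided one checks the $C^{1+}$ continuity of $\Psi$ up to the boundary away from the endpoint preimages, which is precisely where the $C^{2+}$ hypothesis on $\Gamma$ is needed.
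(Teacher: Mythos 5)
Your reduction is exactly the paper's: both parts follow from Corollary~\ref{haydaa} and Theorem~\ref{cheby} once one knows that $g_+'\equiv g_-'$ on $\Gamma_0$ forces $\Gamma_0$ to be analytic. But your proof of that key implication is genuinely different. The paper reflects the \emph{Green function} across the arc: under \eqref{symt} the function equal to $g_\Gamma$ on one side of $\Gamma_0$ and $-g_\Gamma$ on the other is harmonic across $\Gamma_0$ (the matching normal derivatives kill the distributional jump), and the completed analytic function $H=L+i\tilde L$, having nonvanishing derivative and sending $\Gamma_0$ into a line segment, exhibits $\Gamma_0$ as an analytic arc --- this is the Stahl/Yattselev ``symmetric contour'' argument. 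You instead reflect the \emph{exterior conformal map} $\Psi$ across $\partial\bb D$: the symmetry condition, read through $|\Psi'_\pm|=1/g_\pm'$, forces the two-to-one identification involution $\tau$ on $\partial\bb D$ to satisfy $|\tau'|\equiv1$, hence (being a $C^1$ orientation-reversing involution) to be the rigid reflection $w\mapsto e^{ic}/w$, after which Morera glues $\Psi$ with $\Psi(e^{ic}/\cdot)$ across $\partial\bb D\setminus\{w_A,w_B\}$ and $\Gamma_0=\Psi(\partial\bb D\setminus\{w_A,w_B\})$ is analytic since $\Psi'\neq0$ there. Your argument is correct; its one extra input is the $C^{1+}$ boundary regularity of $\Psi$ away from the endpoint preimages, which for an arc (rather than a Jordan curve) requires routing Kellogg--Warschawski through the Joukowski substitution $u=z+\sqrt{z^2-1}$ exactly as in the paper's proof of Theorem~\ref{mainn}; you flag this but should make the detour explicit. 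What your route buys is self-containedness --- it uses only the map $\phi$ already central to the paper and turns the symmetry hypothesis into a transparent rigidity statement about $\tau$ --- whereas the paper's route is shorter and avoids boundary regularity of $\Psi$ altogether by working with the Green function, at the cost of importing the harmonic-continuation lemma from \cite{stahl} and \cite{yatt}.
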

In the next section we prove Theorem \ref{mainn}, Theorem \ref{cheby} and Theorem \ref{easy}.

\section{Proofs}

\begin{proof}[Proof of Theorem \ref{mainn}]
	Without loss of generality, we assume that the endpoints $A, B$ of $\Gamma$ are $-1$ and $1$. Let 
	\begin{align}
		u=L(z)= z+\sqrt{z^2-1}
	\end{align}
	where $\sqrt{z^2-1}$ is asymptotically $z$ near $\infty$ and let
	\begin{align}
		z=\varphi(u)=\frac{1}{2}\left(u+\frac{1}{u}\right)
	\end{align}
	Then $L$ maps $\Gamma$ onto a $C^{2+}$ Jordan curve $\tilde{\Gamma}$, see  \cite[Section 11]{Wid69}. As $u$ traverses $\tilde{\Gamma}$ once, $z$ traverses the arc $\Gamma$ twice. Let $\Omega_{\tilde{\Gamma}}$ be the connected component of $\overline{\bb C}\setminus \tilde{\Gamma}$  which contains $\infty$.
	Then  $\overline{\bb C}\setminus \Gamma$ corresponds to $\Omega_{\tilde{\Gamma}}$ in the sense that $L$ and $\varphi$ are conformal maps from one of these domains onto the other and they both fix $\infty$.
	
	Let us follow the proof of \cite[theorem 12.3]{Wid69}. Since $\omega_\Gamma(z)\sqrt{|z-A||z-B|}$ is $C^{0+}$ and positive on $\Gamma$ it follows that $F_{\mu_\Gamma}(\varphi(\cdot))$ is analytic on $\Omega_{\tilde{\Gamma}}$ as a function of $u$ and it extends to a $C^{0+}$ function on $\tilde{\Gamma}$. Since $g_{\pm}^{\prime}(z)\sqrt{|z-A||z-B|}$  are also positive $C^{0+}$ functions it follows that there is a $C>0$ such that 
	\begin{align}
		\frac{1}{C}< \left\lvert\sqrt{\frac{\phi^{\prime}(\varphi(u))}{R_{\mu_\Gamma}(\varphi(u))}}\right\rvert<C\,\, \mbox{ for all } u\in \tilde{\Gamma}. 
	\end{align}
	If we apply maximum modulus principle for  $\sqrt{\frac{\phi^{\prime}(\varphi(\cdot))}{R_{\mu_\Gamma}(\varphi(\cdot))}}$ and  $\sqrt{\frac{R_{\mu_\Gamma}(\varphi(\cdot))}{\phi^{\prime}(\varphi(\cdot))}}$ we see that both of these functions are bounded in $\Omega_{\tilde{\Gamma}}$. Thus both $\sqrt{\frac{R_{\mu_\Gamma}}{\phi^{\prime}}}$ and $\sqrt{\frac{\phi^{\prime}}{R_{\mu_\Gamma}}}$ are  bounded in $\overline{\bb C}\setminus\Gamma$. 
	
	Let us first obtain the lower bound in \eqref{nu1}. We want to show that \\$2\pi R_{\mu_\Gamma}(\infty)/ \phi^{\prime}(\infty)>1$ or equivalently $\phi^{\prime}(\infty)/(2\pi R_{\mu_\Gamma}(\infty)) <1$. Since $g_{\pm}^{\prime}$ are \\bounded below away from $0$ and the expression in the bracket in \eqref{tirt3} is less than 1 for $u=\pm 1$ (the value at $1$ is defined as  the limit of the expression as $u\rightarrow 1$ with $u\in\tilde{\Gamma}\setminus\{-1,1\}$ and similarly for $u=-1$), the maximum modulus principle yields 
	\begin{align}\label{tirt3}
		\frac{\phi^{\prime}(\infty)}{2\pi R_{\mu_\Gamma}(\infty)}=\frac{\phi^{\prime}(\varphi(\infty))}{2\pi R_{\mu_\Gamma}(\varphi(\infty))}   \leq \sup_{u\in \tilde{\Gamma}}\left[\frac{g_{\pm}^{\prime}(\varphi(u))}{g_{+}^{\prime}(\varphi(u))+g_{-}^{\prime}(\varphi(u))}  \right] <1
	\end{align}
	which gives the lower bound part.
	
	The constant function $1$ is in $H_2(\overline{\bb C}\setminus \Gamma,\,\mu_\Gamma)$  since  $\sqrt{\frac{R_{\mu_\Gamma}(\Psi(\cdot))}{\phi^{\prime}(\Psi(\cdot))}}$  is bounded in $\overline{\bb C}\setminus \bb D$. For the trial function $F\equiv 1$ we have $\|F\|^2_{H_2(\overline{\bb C}\setminus \Gamma,\,\mu_\Gamma)}\leq 2$ since $\mu_\Gamma$ is a unit measure, see \eqref{normi}. Hence $\nu(\mu_\Gamma)\leq 2$.
	
	Assume that $\nu(\mu_\Gamma)=2$. Then the function $F\equiv 1$ is the solution of the extremal problem \eqref{pro1}. It follows from \eqref{extra} that
	\begin{align}
		\sqrt{\frac{\phi^{\prime}(z)}{R_{\mu_\Gamma}(z)}}\sqrt{\frac{R_{\mu_\Gamma}(\infty)}{\phi^{\prime}(\infty)}}=1
	\end{align}
	for all $z\in\overline{\bb C}\setminus \Gamma$ and also on the boundary. Since $|R_{\mu_\Gamma}|$ has identical boundary values on both sides of the arc, we also have $|\phi_{+}^{\prime}(z)|= |\phi_{-}^{\prime}(z)|$ ($ds$ a.e). Since $g_{\pm}^{\prime}$ are continuous on $\Gamma_0$ and \eqref{hadda} holds this implies \eqref{sym}.
	
		Conversely, let us assume \eqref{sym} holds. Then $\phi^{\prime}= \pi R_{\mu_\Gamma}$. This implies that $\nu(\mu_\Gamma)=2\pi R_{\mu_\Gamma}(\infty)/ \phi^{\prime}(\infty)=2$ which is \eqref{nu2}.
\end{proof}

\begin{proof}[Proof of Theorem \ref{cheby}]
	First let us assume that $\rho$ is $ C^{0+}$ and it is bounded below away from $0$. Let 
	\begin{align}
		d\mu= \rho^2  d\mu_\Gamma.
	\end{align}
	Here, $\rho^2(z) \omega_\Gamma(z)\sqrt{|z-A||z-B|}$ is positive and $C^{0+}$ on $\Gamma$. We follow the proof of  \cite[Theorem 12.3]{Wid69}. Let $C$ be a Jordan curve oriented counter-clockwise such that $z$ and $\Gamma$ are in the interior of $C$. Then
	\begin{align}
		Q_n(z):= \frac{1}{2\pi i }\int_C F_\mu(w) \phi^n(w) \frac{dw}{w-z}
	\end{align}
	is a polynomial of degree $n$ with leading coefficient $[\ca(\Gamma)]^{-n}$. It was shown in \cite[Lemma 11.2 and p. 221]{Wid69} that 
	\begin{align}
		\max_{z\in \Gamma}|Q_n(z)-F_{\mu,+}(z)\phi_{+}^n(z)- F_{\mu,-}(z)\phi_{-}^n(z)|\rightarrow 0
	\end{align}
	as $n\rightarrow\infty$. Note that $F_{\mu,+}$, $F_{\mu,-}$, $F_{\mu_\Gamma,+}$, $F_{\mu_\Gamma,-}$, $R_{\rho^2, +}$,  $R_{\rho^2, -}$ are all continuous on $\Gamma$.

	The function $|\sqrt{R_{\rho^2}}|$ has boundary values $\rho(z)$ on $\Gamma$ and  $\sqrt{R_{\rho^2}(\infty)}= R_{\rho}(\infty)=S(\rho)$. Let $\|\cdot \|_\Gamma$ denote the sup-norm on $\Gamma$. Since $W_{\infty,n}(\Gamma,\rho)\leq \|\rho\, Q_n\|_\Gamma$, using the inequality
	
	\begin{align}\label{tirt}
		\frac{\sqrt{a}+\sqrt{b}}{\sqrt{a+b}}\leq \sqrt{2}\, 	\mbox{ for all } a,b>0,
	\end{align}

	 in \eqref{ttttt}, we get
	
	\begin{align}
		\limsup_{n\rightarrow\infty} W_{\infty,n}(\Gamma,\rho)&\leq \limsup_{n\rightarrow\infty} \left[\sup _{z\in\Gamma}|\rho(z)(F_{\mu,+}(z)\phi_{+}^n(z)+F_{\mu,-}(z)\phi_{-}^n(z)|     \right]\\
		&\leq \sup _{z\in\Gamma}\left[\rho(z)(|F_{\mu,+}(z)|+|F_{\mu,-}(z)|   )\right]\\
		&= \sqrt{\frac{R_{\mu}(\infty)}{\phi^{\prime}(\infty)}}   \sup _{z\in\Gamma_0}\left[\rho(z) \left[\left\lvert \sqrt{\frac{\phi_{+}^{\prime}(z)}{R_{\mu}(z)}}  \right\rvert  + \left\lvert \sqrt{\frac{\phi_{-}^{\prime}(z)}{R_{\mu}(z)}}  \right\rvert    \right]\right]\\
		&=\sqrt{ R_{\rho^2}(\infty)\, R_{\mu_\Gamma}(\infty)\, \ca(\Gamma) } \sup _{z\in\Gamma_0}\left[\rho(z) \frac{|\sqrt{\phi_{+}^\prime(z)}|+ |\sqrt{\phi_{+}^\prime(z)}|}{|\sqrt{R_{\rho^2}(z)\, R_{\mu_\Gamma(z)} }|}
		\right]\\
		&=\sqrt{2\pi R_{\mu_\Gamma}(\infty) \ca(\Gamma)} S(\rho)  \sup _{z\in\Gamma_0} \frac{\sqrt{g^\prime_{+}(z)}+ \sqrt{g^\prime_{-}(z)}}{\sqrt{ g^\prime_{+}(z)+g^\prime_{-}(z)}}\label{sahi}\\
		&\leq 2\sqrt{\pi R_{\mu_\Gamma}(\infty) \ca(\Gamma)} S(\rho).\label{ttttt}
	\end{align}
	Thus we obtain \eqref{upp} if $\rho$ is $C^{0+}$ and away from $0$. 
	
	Let us remove these restrictions. Let us assume that $\rho$ is non-negative and $S(\rho)>0$. Since $\rho$ is upper semicontinuous there is a sequence $\rho_m$ of $C^{0+}$ positive functions on $\Gamma$ such that $\rho_m\downarrow \rho$. By Lebesgue's monotone convergence theorem $\int \log{\rho_m}\,d\mu_\Gamma \downarrow \int \log{\rho}\,d\mu_\Gamma$ and thus 
	\begin{align}\label{slii}
		S(\rho_m)\downarrow S(\rho)
	\end{align}
	as  $m\rightarrow\infty$. Since $W_{\infty,n}(\Gamma,\rho)\leq W_{\infty,n}(\Gamma,\rho_m)$ for all $m,n$, \eqref{ttttt} yields
	\begin{align}
		\limsup_{n\rightarrow\infty} W_{\infty,n}(\Gamma,\rho) \leq 2\sqrt{\pi R_{\mu_\Gamma}(\infty) \ca(\Gamma)} S(\rho_m)
	\end{align}
	and since \eqref{slii} holds we get \eqref{upp}.
	
	If there is a $z\in \Gamma_0$ such that  $g_{+}^{\prime}(z)\neq g_{-}^{\prime}(z)$ then $\pi R_{\mu_\Gamma}(\infty)\ca(\Gamma)<1$ by Theorem \ref{mainn}.  Thus  \eqref{upp2} follows from \eqref{upp} in this case.
\end{proof}

\begin{proof}Proof of Theorem \ref{easy}
	We show that $C^{2+}$ smoothness of $\Gamma$ and the condition \eqref{symt} imply analyticity of $\Gamma_0$.  This is indeed a known result. We summarize the ideas in \cite[Proposition 4]{stahl} and \cite[p. 87-88]{yatt}.
	
Assume that \eqref{symt} is satisfied.	Let $U$ be a simply connected domain such that $\Gamma_0\subset U$ and $U\setminus \Gamma_0= U_1 \cup U_2$  where $U_1$ and $U_2$ are two disjoint subdomains of $U$. Then the function
	\[ L(z)=\begin{cases} 
		g_{\Gamma}(z) & z\in U_1 \cup \Gamma_0 \\
		-g_{\Gamma}(z)& z\in U_2
	\end{cases}
	\]
	is harmonic on $U$ (clearly the condition \eqref{symt} is necessary for such an extension of the Green function). Let $\tilde{L}$ be a harmonic conjugate of $L$. Then $H:= L+i\tilde{L}$ is analytic on $U$ and $H^{\prime}=L_x-iL_y$ is non-zero on $\Gamma_0$, see \cite[p. 88]{yatt}. Since $L(z)=0$ on $\Gamma_0$ it follows that $H$ is injective on $\Gamma_0$ and $iH(\Gamma_0)=(c,d)$ where $c,d\in \bb R$, $c\neq d$. This implies that $\Gamma_0$ is analytic which is a contradiction. 
	
	Hence there is a $z\in \Gamma_0$ such that $g_{+}^{\prime}(z)\neq g_{-}^{\prime}(z)$.
	Thus parts $(i)$ and $(ii)$ follow from Corollary \ref{haydaa} and Theorem \ref{cheby}.
\end{proof}



\end{document}